\documentclass[11pt]{article}
\usepackage{amsmath, amssymb, amscd, amsthm, amsfonts}
\usepackage{graphicx}
\usepackage{hyperref}
\usepackage{amssymb}
\usepackage{amsmath}
\usepackage{amsthm}
\usepackage{amsfonts}

\usepackage{a4wide}
\usepackage{longtable}
\usepackage{multirow}

\usepackage[latin1]{inputenc}
\usepackage[english]{babel}
\usepackage[T1]{fontenc}
\usepackage{amssymb,amsmath,amsthm,amstext,amsfonts,latexsym}

\usepackage{pgf,tikz}
\usepackage{mathrsfs}
\usetikzlibrary{arrows}

\newtheorem{lemma}{Lemma}[section]
\newtheorem{proposition}{Proposition}[section]
\newtheorem{theorem}{Theorem}[section]
\newtheorem{corollary}{Corollary}[section]

\newtheorem{remark}{Remark}[section]
\theoremstyle{definition}
\newtheorem{definition}{Definition}[section]

\theoremstyle{remark}

\usepackage[all]{xy}
\oddsidemargin 0pt
\evensidemargin 0pt
\marginparwidth 40pt
\marginparsep 10pt
\topmargin -20pt
\headsep 10pt
\textheight 8.7in
\textwidth 6.65in
\linespread{1.2}

\title{On the capitulation problem of some pure metacyclic fields\\ of degree 20}

\date{}

\begin{document}

\maketitle
\begin{center}
{\sc Fouad ELMOUHIB }\\
{\footnotesize Department of Mathematics and Computer Sciences,\\
Mohammed first University, Oujda, Morocco,\\
Correspondence: fouad.cd@gmail.com}\\
\vspace{0.7cm}
{\sc Mohamed TALBI }\\
{\footnotesize Regional Center of Professions of Education and Training,\\
ksirat1971@gmail.com}\\

\vspace{0.7cm}
{\sc Abdelmalek AZIZI }\\
{\footnotesize Department of Mathematics and Computer Sciences,\\
Mohammed first University, Oujda, Morocco,\\
abdelmalekazizi@yahoo.fr}
\end{center}

\begin{abstract}
Let $\Gamma \,=\, \mathbb{Q}(\sqrt[5]{n})$ be a pure quintic field, where $n$ is a positive integer $5^{th}$ power-free, $k_0\,=\,\mathbb{Q}(\zeta_5)$ be the cyclotomic field containing  a primitive  $5^{th}$ root of unity $\zeta_5$, and $k\,=\,\mathbb{Q}(\sqrt[5]{n},\zeta_5)$ the normal closure of $\Gamma$. Let $k_5^{(1)}$ be the Hilbert $5$-class field of $k$, $C_{k,5}$ the $5$-ideal classes group of $k$, and $C_{k,5}^{(\sigma)}$ the group of ambiguous classes under the action of $Gal(k/k_0)$ = $\langle\sigma\rangle$. When $C_{k,5}$ is of type $(5,5)$ and rank $C_{k,5}^{(\sigma)}\,=\,1$, we study the capitulation problem of the $5$-ideal classes of $C_{k,5}$ in the six intermediate extensions of $k_5^{(1)}/k$.
\end{abstract}

\textbf{Key words}: pure metacyclic fields, 5-class groups, Hilbert 5-class field, Capitulation.\\
\textbf{AMS Mathematics Subject Classification}: 11R04, 11R18, 11R29, 11R37

\section{Introduction}\label{introduction}
Let $K$ be a number field of finite degree over $\mathbb{Q}$. Let $F$ be an unramified extension of $K$ of finite degree and let $\mathcal{O}_F$ be its ring of integers. We say that an ideal $\mathcal{A}$ ( or the ideal class of $\mathcal{A}$) of $K$ capitulates in $F$, if it becomes principal in $F$, i.e if $\mathcal{A}\mathcal{O}_F$ is principal in $F$. Let $p$ a prime number and $C_{K,p}$ be the $p$-ideal class group of $K$ such that $C_{K,p}$ is of type $(p,p)$. Let $K_p^{(1)}$ be the Hilbert $p$-class field of $K$, i.e the maximal abelian unramified extension of $K$. The most important result on capitulation is the "Principal Ideal Theorem"  conjectured by D. Hilbert in \ref{Hilbert} and proved by His student P. Furtw$\displaystyle\stackrel{\text{..}}{a}$ngler in \ref{Artin}. The theorem asserts that the class
group of a given number field capitulates completely in its Hilbert class field. Having established the "Principal Ideal Theorem", the question remains which ideal classes of K capitulate in a field which lies between $K$ and its Hilbert class field. A number of researchers have studied this question, Taussky and Scholz in \ref{Tauss and skolz} were the first who treated the capitulation in unramified cyclic degree-3-extensions of several imaginary quadratic fields. Tannaka and Terada proved that for a cyclic extension $K/K_0$ with Galois group $G$, the $G$-invariant ideal classes in $K$ capitulate in the genus field of $K/K_0$, for more details see \ref{Teradda}. Hilbert in his celebrated Zahlbericht, he proved his Theorem 94 which states that in an unramfied cyclic extension $L/K$, there are non-principal ideals which capitulates in $L$. For more several results on capitulation we refer the reader to   \ref{ayadi}, \ref{azizi}, \ref{ism}, \ref{Myaki}, \ref{Talbi} \ref{taussky}...\\
In this paper we consider $p = 5$ and $k\,=\,\mathbb{Q}(\sqrt[5]{n},\zeta_5)$, $(\zeta_5\,=\,e^{\frac{2i\pi}{5}})$, be the normal closure of the pure quintic field $\Gamma\,=\,\mathbb{Q}(\sqrt[5]{n})$, where $n$ is a positive integer $5^{th}$ power free. We suppose that the $5$-class group $C_{k,5}$ of $k$ is of type $(5,5)$, then the Hilbert $5$-class field $k_5^{(1)}$ of $k$ has degree $25$ over $k$, and the extension $k_5^{(1)}/k$ contains six intermediate extensions. By $k_0\,=\,\mathbb{Q}(\zeta_5)$ we denote the cyclotomic field containing a primitive $5^{th}$ root of unity $\zeta_5$. Let $C_{k,5}^{(\sigma
)}$ be the group of ambiguous ideal classes of $k$ under the action of $Gal(k/k_0)\,=\,\langle\sigma\rangle$. According to \ref{FOU1}, if $C_{k,5}$ is of type $(5,5)$ and rank $C_{k,5}^{(\sigma
)}\,=\,1$ we have three forms of the radicand $n$ as follows
\begin{itemize}
\item[-] $n\,=\,5^ep\,\not\equiv\,\pm1\pm7\,(\mathrm{mod}\,25)$ with $e\in\{1,2,3,4\}$ and p prime such that $p\,\not\equiv\,-1\,(\mathrm{mod}\,25)$.
\item[-] $n\,=\,p^eq\,\equiv\,\pm1\pm7\,(\mathrm{mod}\,25)$ with $e\in\{1,2,3,4\}$, p and q are primes such that $p\,\not\equiv\,-1\,(\mathrm{mod}\,25)$, $q\,\not\equiv\,\pm7\,(\mathrm{mod}\,25)$.
\item[-] $n\,=\,p^e\,\equiv\,\pm1\pm7\,(\mathrm{mod}\,25)$ with $e\in\{1,2,3,4\}$ and p prime such that $p\,\equiv\,-1\,(\mathrm{mod}\,25)$.
\end{itemize}

We will study the capitulation of $C_{k,5}$ in the six  intermediate extensions of $k_5^{(1)}/k$ In this cases.


\begin{center}

$\textbf{Notations. \ }$
\end{center}
Throughout this paper, we use the following notations:
\begin{itemize}
 
 \item $\Gamma\,=\,\mathbb{Q}(\sqrt[5]{n})$: a pure quintic field, where $n\neq 1$ is a $5^{th}$ power-free positive integer.
 
 \item $k_0\,=\,\mathbb{Q}(\zeta_5)$, the cyclotomic field, where $\zeta_5\,=\,e^{2i\pi/5}$ a primitive $5^{th}$ root of unity.
 
 \item $k\,=\,\mathbb{Q}(\sqrt[5]{n},\zeta_5)$: the normal closure of $\Gamma$, a quintic Kummer extension of $k_0$.
 
\item $\Gamma^{'},\,\Gamma^{''},\,\Gamma^{'''},\,\Gamma^{''''},\,$ the four conjugates quintic fields of $\Gamma$, contained in $k$. 
 
 \item $\langle\tau\rangle\,=\,\operatorname{Gal}(k/\Gamma)$ such that $\tau^4\,=\,id,\, \tau^3(\zeta_5)\,=\,\zeta_5^3,\, \tau^2(\zeta_5)\,=\,\zeta_5^4,\, \tau(\zeta_5)\,=\,\zeta_5^2$ and $\tau(\sqrt[5]{n})\,=\,\sqrt[5]{n}$.
 
 \item $\langle\sigma\rangle\,=\,\operatorname{Gal}(k/k_0)$ such that $\sigma^5\,=\,id,\ \sigma(\zeta_5)\,=\,\zeta_5$ and $\sigma(\sqrt[5]{n})\,=\,\zeta_5\sqrt[5]{n},\, \sigma^2(\sqrt[5]{n})\,=\,\zeta_5^2\sqrt[5]{n},\,\\\\ \sigma^3(\sqrt[5]{n})\,=\,\zeta_5^3\sqrt[5]{n},\, \sigma^4(\sqrt[5]{n})\,=\,\zeta_5^4\sqrt[5]{n} $.
 
 \item For a number field $L$, denote by:
\begin{itemize}
   \item $\mathcal{O}_{L}$: the ring of integers of $L$;
   \item $C_{L}$, $h_{L}$, $C_{L,5}$: the class group, class number, and $5$-class group of $L$.
   \item $L_5^{(1)}, L^*$: the Hilbert $5$-class field of $L$, and the absolute genus field of $L$. 
  \end{itemize}
  
\end{itemize}

\begin{center}
\begin{tikzpicture}
\begin{scope}[xscale=2,yscale=2]
  \node (P) at (0,5) {$\mathbf{k_5^{(1)}}$};
  \node (J) at (0.5,4) {$\mathbf{K_4}$};
  \node (K) at (1,4) {$\mathbf{K_5}$};
  \node (L) at (1.5,4) {$\mathbf{K_6}$};
  \node (M) at (-0.5,4) {$\mathbf{K_3}$};
  \node (N) at (-1,4) {$\mathbf{K_2}$};
  \node (O) at (-1.5,4) {$\mathbf{K_1}$};
  
  \node (A) at (0,3) {$\mathbf{k}$};
  \node (B) at (1,2){$\mathbf{\Gamma}$ };
  \node (C) at (1.5,2){$\mathbf{\Gamma'}$ };
  \node (D) at (2,2){$\mathbf{\Gamma''}$ };
  \node (E) at (2.5,2){$\mathbf{\Gamma'''}$ };
  \node (F) at (3,2){$\mathbf{\Gamma''''}$ };  
  \node (G) at (-1,1) {$\mathbf{k_0}$};
  \node (H)   at (0,0) {$\mathbb{Q}$ };
  \node (I)   at (0,-0.5) {\underline{Figure 1}};
  \draw [-,>=latex] (G) -- (A)   node[midway,above,rotate=40] {};
  \draw [-,>=latex] (B) -- (A)   node[midway,above,rotate=-40] {};
  \draw [-,>=latex] (C) -- (A)   node[midway,below left] {};
  \draw [-,>=latex] (D) -- (A)   node[midway,below left] {};
  \draw [-,>=latex] (E) -- (A)   node[midway,below left] {};
  \draw [-,>=latex] (F) -- (A)   node[midway,below left] {};
  \draw [-,>=latex] (H) -- (G)   node[midway,below right] {};
  \draw [-,>=latex] (H) -- (B)   node[midway,below right] {};
  \draw [-,>=latex] (H) -- (C)   node[midway,below right] {};
  \draw [-,>=latex] (H) -- (D)   node[midway,below right] {};
  \draw [-,>=latex] (H) -- (E)   node[midway,below right] {};
  \draw [-,>=latex] (H) -- (F)   node[midway,below right] {};
  \draw [-,>=latex] (A) -- (O)   node[midway,below left] {};
  \draw [-,>=latex] (A) -- (N)   node[midway,below left] {};
  \draw [-,>=latex] (A) -- (M)   node[midway,below left] {};
  \draw [-,>=latex] (A) -- (J)   node[midway,below left] {};
  \draw [-,>=latex] (A) -- (K)   node[midway,below left] {};
  \draw [-,>=latex] (A) -- (L)   node[midway,below left] {};  
  
  \draw [-,>=latex] (M) -- (P)   node[midway,below left] {};
  \draw [-,>=latex] (N) -- (P)   node[midway,below left] {};
  \draw [-,>=latex] (O) -- (P)   node[midway,below left] {};
  \draw [-,>=latex] (J) -- (P)   node[midway,below left] {};
  \draw [-,>=latex] (K) -- (P)   node[midway,below left] {};
  \draw [-,>=latex] (L) -- (P)   node[midway,below left] {};  
  
\end{scope}

\end{tikzpicture}

\end{center}


\section{Construction of intermediate extensions}

\subsection{\Large Absolut genus field}
In chapter $7$ of [\ref{Ishi}], Ishida has explicitly given the genus field of any pure field. For the pure quintic field $\Gamma = \mathbb{Q}(\sqrt[5]{n})$ where $n$ is $5^{th}$ power-free naturel number, we have the following theorem.
\begin{theorem}
Let $\Gamma \,=\, \mathbb{Q}(\sqrt[5]{n})$ be a pure quintic field, where $n \geq 2$ is $5^{th}$ power-free naturel number, and let $p_1\,,,,,p_r$ be all primes divisors of $n$ such that $p_i$ congruent 1 $(\mathrm{mod}\, 5)$ for each $i \in \{1,,,,,r\}$. Let $\Gamma^*$ be the absolute genus field of $\Gamma$, then
\begin{center}
$\Gamma^*\, = \, \prod_{i=1}^{r}M(p_i).\Gamma$ 
\end{center}
where $M(p_i)$ denotes the unique subfield of degree 5 of the cyclotomic field $\mathbb{Q}(\zeta_{p_i})$. The genus number of $\Gamma$ is given by, $g_{\Gamma}\, = \, [\Gamma^*:\Gamma]\, = \, 5^r$.
\end{theorem}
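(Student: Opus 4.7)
The plan is to establish the equality $\Gamma^* = \Gamma \cdot \prod_{i=1}^r M(p_i)$ by proving two inclusions, since by definition $\Gamma^*$ is the compositum $\Gamma \cdot F$ where $F$ is the maximal abelian extension of $\mathbb{Q}$ with the property that $F\Gamma/\Gamma$ is unramified at every finite and infinite place of $\Gamma$. The main tools will be Abhyankar's lemma, a careful ramification analysis at each rational prime dividing $5n$, and Kronecker--Weber to reduce the maximality question to a classification of cyclic subextensions of cyclotomic fields.

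For the inclusion $\Gamma \cdot \prod_{i=1}^{r} M(p_i) \subseteq \Gamma^*$, I would first verify that each factor $M(p_i) \cdot \Gamma / \Gamma$ is unramified. The prime $p_i \equiv 1 \pmod 5$ is tamely totally ramified in $\Gamma/\mathbb{Q}$ and in $M(p_i)/\mathbb{Q}$ with ramification index exactly $5$; Abhyankar's lemma then yields unramifiedness of the primes above $p_i$ in $M(p_i)\Gamma/\Gamma$. At every other finite prime the extension $M(p_i)/\mathbb{Q}$ is already unramified (as a subfield of $\mathbb{Q}(\zeta_{p_i})$), so the compositum with $\Gamma$ inherits this. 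Since $M(p_i)$ is totally real (being the odd-degree subfield of $\mathbb{Q}(\zeta_{p_i})^+$), no real place of $\Gamma$ becomes ramified. Composing over all $i$ preserves unramifiedness, giving the first inclusion.

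For the reverse inclusion, I would use class field theory to describe $\Gamma^*$ as $\Gamma \cdot F$ with $F/\mathbb{Q}$ abelian and decompose $F$ into its cyclic components. By Kronecker--Weber and the conductor--discriminant formula, each cyclic component is ramified only at primes dividing $5n$, and the local inertia at each such prime must map into the inertia of $\Gamma$ at a prime above, forcing the local degree to divide $5$. For every odd prime $p \mid n$ this leaves exactly one candidate, the unique degree-$5$ subfield of $\mathbb{Q}(\zeta_p)$, which is nontrivial precisely when $p \equiv 1 \pmod 5$; that is exactly $M(p)$. The main obstacle, and the delicate step in Ishida's argument, is the prime $5$: one must show that no nontrivial cyclic $5$-extension of $\mathbb{Q}$ ramified at $5$ (the candidates sit inside $\mathbb{Q}(\zeta_{25})$) yields an unramified extension when composed with $\Gamma$. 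This requires a local computation at $5$, using the structure of the completion $\Gamma_{\mathfrak{p}}$ with $\mathfrak{p} \mid 5$ and the wild higher ramification filtration there, to show that Abhyankar's lemma cannot be applied and that any such candidate genuinely ramifies over $\Gamma$.

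Finally, the index formula $g_\Gamma = [\Gamma^* : \Gamma] = 5^r$ follows from the fact that the $M(p_i)$ have pairwise coprime conductors $p_i$, hence are linearly disjoint over $\mathbb{Q}$ and together have degree $5^r$, while $\left(\prod_i M(p_i)\right) \cap \Gamma = \mathbb{Q}$ since $\Gamma/\mathbb{Q}$ is non-abelian whereas the compositum is abelian.
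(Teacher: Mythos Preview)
The paper does not actually prove this theorem: it is quoted verbatim from Ishida's monograph (reference [\ref{Ishi}], Chapter~7), with no argument given. So there is no ``paper's own proof'' to compare against beyond the citation.

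Your sketch is correct and is, in outline, exactly Ishida's approach. The forward inclusion via Abhyankar's lemma at each $p_i\equiv 1\pmod 5$ is standard and your treatment of the infinite places is right. For the reverse inclusion your reduction via Kronecker--Weber to cyclic pieces ramified only at primes dividing $5n$, together with the observation that only the degree-$5$ subfield of $\mathbb{Q}(\zeta_p)$ can be absorbed (and only when $5\mid p-1$), is the right mechanism. You have also correctly isolated the one genuinely nontrivial point: ruling out any contribution from the degree-$5$ subfield of $\mathbb{Q}(\zeta_{25})$. Abhyankar's lemma does not apply there because the ramification at $5$ is wild on both sides, and Ishida's argument at this step is a direct local computation showing that the compositum remains ramified above $5$ regardless of whether $5\mid n$ or not (the two cases $n\equiv\pm1,\pm7\pmod{25}$ versus not, corresponding to $e(5)=5$ or $e(5)=25$ in $k/\mathbb{Q}$, both have to be checked). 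Your proposal names this obstacle accurately but does not carry it out; if you want a self-contained proof rather than a citation, that local analysis is the only missing piece.
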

\begin{remark}
\item - If no prime $p \, \equiv \, 1\, (\mathrm{mod}\, 5)$ divides $n$, i.e $r=0$, then $\Gamma^*\, = \, \Gamma$.
\item - For any $r \geq 0$, $\Gamma^*$ is contained in the Hilbert $5$-class field $\Gamma_5^{(1)}$ of $\Gamma$.
\end{remark}
\begin{corollary}
\label{prem+}
\item(1) If $5$ divides exactly the class number $h_\Gamma$, then there is at most one prime $p \, \equiv \, 1\, (\mathrm{mod}\, 5)$ divides the integer $n$.
\item (2) If $5$ divides exactly $h_\Gamma$, and if a prime $p \, \equiv \, 1\, (\mathrm{mod}\, 5)$ divides $n$, then $\Gamma^{*} \, =\, \Gamma_5^{(1)}$; $(\Gamma^{'})^{*}\, =\, (\Gamma^{'})_5^{(1)}$; $(\Gamma^{''})^{*}\, =\, (\Gamma^{''})_5^{(1)}$; $(\Gamma^{'''})^{*}\, =\, (\Gamma^{'''})_5^{(1)}$; $(\Gamma^{''''})^{*}\, =\, (\Gamma^{''''})_5^{(1)}$. Furthermore, $k.\Gamma_5^{(1)}\,=\,k.(\Gamma^{'})_5^{(1)}\,=\,k.(\Gamma^{''})_5^{(1)}\,=\,k.(\Gamma^{'''})_5^{(1)}\,=\,k.(\Gamma^{''''})_5^{(1)}$
\end{corollary}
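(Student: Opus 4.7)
The plan is to combine the formula $[\Gamma^{*}:\Gamma]=5^{r}$ from the preceding theorem with the inclusion $\Gamma^{*}\subseteq\Gamma_{5}^{(1)}$ recorded in the remark, and then to transport the conclusion to the four conjugate fields $\Gamma',\ldots,\Gamma''''$ and to their composita with $k$.

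For part~(1), the hypothesis that $5$ exactly divides $h_{\Gamma}$ means that $C_{\Gamma,5}$ has order~$5$, so by class field theory $[\Gamma_{5}^{(1)}:\Gamma]=5$. The inclusion $\Gamma^{*}\subseteq\Gamma_{5}^{(1)}$ together with the identity $[\Gamma^{*}:\Gamma]=5^{r}$ forces $5^{r}\le 5$, hence $r\le 1$. For part~(2), the additional hypothesis that some prime $p\equiv 1\pmod{5}$ divides $n$ yields $r\ge 1$, and combined with part~(1) this gives $r=1$; therefore $[\Gamma^{*}:\Gamma]=5=[\Gamma_{5}^{(1)}:\Gamma]$, which together with $\Gamma^{*}\subseteq\Gamma_{5}^{(1)}$ forces $\Gamma^{*}=\Gamma_{5}^{(1)}$. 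I would then repeat the argument verbatim for each conjugate $\Gamma^{(j)}$: since these fields are $\mathbb{Q}$-isomorphic to $\Gamma$, they share the same class number and the same radicand $n$, so Ishida's theorem applies with the same list of primes and gives $(\Gamma^{(j)})^{*}=(\Gamma^{(j)})_{5}^{(1)}$.

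For the chain of equalities of composita, I would rely on the explicit description $\Gamma^{*}=M(p)\cdot\Gamma$ and, more generally, $(\Gamma^{(j)})^{*}=M(p)\cdot\Gamma^{(j)}$ furnished by the genus theorem. Since every conjugate $\Gamma^{(j)}$ is already contained in $k$, absorbing it into $k$ yields
\[
k\cdot(\Gamma^{(j)})_{5}^{(1)}\,=\,k\cdot(\Gamma^{(j)})^{*}\,=\,k\cdot M(p)\cdot\Gamma^{(j)}\,=\,k\cdot M(p),
\]
which is manifestly independent of the index~$j$, establishing the last assertion. The whole argument is essentially a degree count, so no serious obstacle is expected; the only point that requires a bit of care is checking that each conjugate field really has $n$ as radicand and therefore the same distinguished primes, but this is immediate from the Galois description $\sigma^{j}(\sqrt[5]{n})=\zeta_{5}^{j}\sqrt[5]{n}$ recorded in the notation list.
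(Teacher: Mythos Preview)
Your proof is correct and follows essentially the same approach as the paper: both use the genus-number formula $[\Gamma^{*}:\Gamma]=5^{r}$ together with the containment $\Gamma^{*}\subseteq\Gamma_{5}^{(1)}$ to bound $r$, then transfer the conclusion to the conjugate fields via their $\mathbb{Q}$-isomorphism with $\Gamma$, and finally read off the equality of composita from the explicit description $(\Gamma^{(j)})^{*}=\Gamma^{(j)}\cdot M(p)$ and the fact that each $\Gamma^{(j)}\subseteq k$. The only cosmetic difference is that the paper phrases the first step as ``$5^{r}\mid h_{\Gamma}$'' rather than as a degree comparison, but this is the same observation.
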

\begin{proof}
\item(1) If $p_1,,,,p_r$ are all prime congruent to $1\, (\mathrm{mod \, } 5)$ and divides $n$, then $5^r|h_\Gamma$ therfore if $5$ divides exactly $h_\Gamma$ then $r=1$, so one prime $p \, \equiv \, 1\, (\mathrm{mod}\, 5)$ divides $n$ exist, or no one.
\item(2) If $h_\Gamma$ is exactly divisible by $5$ and $p \, \equiv \, 1\, (\mathrm{mod}\, 5)$ divides $n$, then $g_\Gamma = 5$ so, $\Gamma^{*}\,=\, \Gamma_5^{(1)}\,=\, \Gamma.M(p)$, we have $g_\Gamma = 5$ so, $(\Gamma^{'})^{*}\,=\, \Gamma^{'}\prod_{i=1}^{r}.M(p_i)$ where $p_1,,,,p_r$ are prime congruent to $1$ $(\mathrm{mod}\, 5)$ and divides $n$, if $5$ divides exactly $h_\Gamma$ then $5$ divides exactly also $h_{\Gamma^{'}}$, because $\Gamma$ and $\Gamma^{'}$ are isomorphic, so we obtaint $(\Gamma^{'})^{*}\, = \, (\Gamma^{'})_5^{(1)}$. For the others equality we use the same reasoning. Moreover,
\begin{center}
\begin{equation}
\label{}
 \left\lbrace
   \begin{array}{ll}
   
   k.\Gamma_5^{(1)}\,=\,k.\Gamma.M(p)\,=\,k.M(p)\\ 
   k.(\Gamma^{'})_5^{(1)}\,=\,k.\Gamma^{'}.M(p)\,=\,k.M(p)\\
   k.(\Gamma^{''})_5^{(1)}\,=\,k.\Gamma^{''}.M(p)\,=\,k.M(p)\\
   k.(\Gamma^{'''})_5^{(1)}\,=\,k.\Gamma^{'''}.M(p)\,=\,k.M(p)\\
   k.(\Gamma^{''''})_5^{(1)}\,=\,k.\Gamma^{''''}.M(p)\,=\,k.M(p)\\
   
   \end{array}
   \right.
\end{equation}
\end{center}
Hence, $k.\Gamma_5^{(1)}\,=\,k.(\Gamma_5^{'})^{(1)}\,=\,k.(\Gamma_5^{''})^{(1)}\,=\,k.(\Gamma_5^{'''})^{(1)}\,=\,k.(\Gamma_5^{''''})^{(1)}$
\end{proof}

\begin{remark}\label{rem distin}
If $h_\Gamma$ is exactly divisible by $5$, and the radicand $n$ is not divisible by any prime $p \, \equiv \, 1\, (\mathrm{mod}\, 5)$, then the fields $k.\Gamma_5^{(1)}$, $k.(\Gamma_5^{'})^{(1)}$, $k.(\Gamma_5^{''})^{(1)}$, $k.(\Gamma_5^{'''})^{(1)}$ and $k.(\Gamma_5^{''''})^{(1)}$ are distincts.
\end{remark}

\subsection{\Large Relative genus field $(k/k_0)^*$ of $k$ over $k_0$}
Let $\Gamma\,=\,\mathbb{Q}(\sqrt[5]{n})$ be a pure quintic field, $k_0\,=\,\mathbb{Q}(\zeta_5)$ the $5^{th}$-cyclotomic field and $k\,=\,\Gamma(\zeta_5)$ be the normal closure of $\Gamma$. The relative genus field $(k/k_0)^*$ of $k$ over $k_0$ is the maximal abelian extension of $k_0$ which is contained in the Hilbert $5$-class field $k_5^{(1)}$ of $k$. A class $\mathcal{A} \in C_{k,5}$ is called ambiguous class relativly to $k/k_0$ if $\mathcal{A}^\sigma\,=\,\mathcal{A}$ such that $Gal(k/k_0)\,=\,\langle\sigma\rangle$. We note the group of ambiguous classes by $C_{k,5}^{(\sigma)}$.\\
The following proposition summarize some importants results of \ref{Mani}. Since $k$ is quintic Kummer extension of $k_0$, we can write $k\,=\,k_0(\sqrt[5]{n})$ such that $n\,=\,\mu\lambda^{e_\lambda}\pi_1^{e_1}....\pi_g^{e_g}$, where $\mu$ is unity of $\mathcal{O}_{k_0}$, $\lambda\,=\,1-\zeta_5$ the unique prime above $5$ in $k_0$, $\pi_i\,(1\leq i\leq g)$ are prime elements of $k_0$ such that $\pi_i\,\equiv\,a\,(\mathrm{mod}\,5\mathcal{O}_{k_0})$ with
$a\in \{1,2,3,4\}$ and $e_\lambda,e_i \in \{0,1,2,3,4\}$. We note by $d$ the number of ramified prime in $k/k_0$.
\begin{proposition}\label{prop genre}
\item[$(1)$] $C_{k}^{(\sigma)}\,=\,\{\mathcal{A}\in C_{k}\,|\, \mathcal{A}^\sigma = \mathcal{A}\}$ the subgroup of ambiguous classes coincide with the $5$-class group $C_{k,5}^{(\sigma)}\,=\,\{\mathcal{A}\in C_{k,5}\,|\,\mathcal{A}^\sigma = \mathcal{A}\}$.
\item[$(2)$] rank $C_{k,5}^{(\sigma)}\,=\,d-3+q^*$.
\item[$(3)$]$q^*$ = $ \begin{cases}
2 & \text{if }\, \zeta_5 , \zeta_5+1 \text{are norme of element in}\, k^*.\\
1 & \text{if some}\, \zeta_5^i(\zeta_5+1)^j \text{are norme of element in}\,k^*.\\
0 & \text{if none}\, \zeta_5^i(\zeta_5+1)^j \text{are norme of element in}\, k^*.\\
\end{cases}$
\item[(4)] Let $k\,=\,k_0(\sqrt[5]{n})$, writing $n$ = $\mu\lambda^{e_{\lambda}}\pi_{1}^{e_1}....\pi_{f}^{e_f}\pi_{f+1}^{e_{f+1}}.....\pi_{g}^{e_g}$, where each $\pi_i \,\equiv\, \pm1,\pm7\, (\mathrm{mod}\,\lambda^5)$ for $1\leq i \leq f$ and $\pi_j \,\not\equiv\, \pm1,\pm7 \, (\mathrm{mod}\,\lambda^5)$ for $f+1\leq j \leq g$. Then we have:
\begin{itemize}
\item[$(i)$] There exists $h_i \in \{1,..,4\}$ such that $\pi_{f+1}\pi_i^{h_i}\,\equiv\, \pm1,\pm7\, (\mathrm{mod}\,\lambda^5)$, for $f+2\leq i \leq g$.
\item[$(ii)$] if $n\,\not\equiv\,\pm1\pm7\, (\mathrm{mod}\,\lambda^5)$ and $q^*\,=\,1$, then the genus field $(k/k_0)^*$ is given as:
\begin{center}
$(k/k_0)^*$ = $k(\sqrt[5]{\pi_1},....\sqrt[5]{\pi_f},\sqrt[5]{\pi_{f+1}\pi_{f+2}^{h_{f+2}}},....\sqrt[5]{\pi_{f+1}\pi_{g}^{h_{g}}})$
\end{center}
with $h_i$ as in  (i).
\item[$(iii)$] In the other cases of $q^*$ and the congruence of $n$, the genus field  $(k/k_0)^*$ is given by deleting an appropriate number of $5^{th}$ root from the right side of (ii).
\end{itemize}

\end{proposition}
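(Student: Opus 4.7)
The plan is to address parts $(1)$--$(3)$ via Chevalley's ambiguous class number formula, and then to handle part $(4)$ by an explicit Kummer-theoretic construction of the genus field. For $(1)$ I would exploit the triviality of $C_{k_0}$: given any $\mathcal{A}\in C_k$ fixed by $\sigma$, one has
$$\mathcal{A}^5 \,=\, \mathcal{A}^{1+\sigma+\sigma^2+\sigma^3+\sigma^4} \,=\, \iota_{k/k_0}(N_{k/k_0}\mathcal{A}),$$
and since $N_{k/k_0}\mathcal{A}\in C_{k_0}=\{1\}$, we get $\mathcal{A}^5=1$. Hence every ambiguous class lies in $C_{k,5}$, proving $C_k^{(\sigma)}=C_{k,5}^{(\sigma)}$, and moreover this group is elementary abelian of exponent $5$, so its $\mathbb{F}_5$-rank equals $\log_5|C_k^{(\sigma)}|$.

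For $(2)$ I would apply Chevalley's ambiguous class number formula for the cyclic degree-$5$ extension $k/k_0$:
$$|C_k^{(\sigma)}| \,=\, \frac{h_{k_0}\prod_{\mathfrak{p}} e_{\mathfrak{p}}}{[k:k_0]\cdot[E_{k_0}:E_{k_0}\cap N_{k/k_0}(k^*)]}.$$
With $h_{k_0}=1$, with $d$ ramified primes each of ramification index $5$, and $[k:k_0]=5$, the right-hand side simplifies to $5^{d-1}/[E_{k_0}:E_{k_0}\cap N_{k/k_0}(k^*)]$. The quotient $E_{k_0}/E_{k_0}^5$ is a two-dimensional $\mathbb{F}_5$-space generated by $\zeta_5$ and the fundamental unit (which one can take to be $\zeta_5+1$), and since $E_{k_0}^5\subseteq N_{k/k_0}(k^*)$, the index equals $5^{2-q^*}$ directly from the definition of $q^*$. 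Combining gives rank $C_{k,5}^{(\sigma)}=d-3+q^*$, while $(3)$ is simply the case-by-case reading of $q^*$ as the $\mathbb{F}_5$-dimension of the subspace of $E_{k_0}/E_{k_0}^5$ that lies in the norm group.

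The main obstacle is $(4)$. By class field theory, $(k/k_0)^*$ is the maximal subextension of $k_5^{(1)}/k$ that is abelian over $k_0$; since $k_0$ contains $\zeta_5$, Kummer theory shows this extension is obtained from $k$ by adjoining fifth roots of suitable elements of $k_0^*$. A Kummer extension $k(\sqrt[5]{\alpha})/k$ with $\alpha\in k_0^*$ is unramified over $k$ exactly when $\alpha$ is locally a fifth power at every prime of $k_0$ that does not totally ramify in $k/k_0$; among these, the only serious constraint is at the prime $\lambda$ above $5$, and an explicit computation in $(\mathcal{O}_{k_0}/\lambda^5)^*$ shows that the subgroup of fifth powers is exactly represented by the classes of $\pm 1,\pm 7$. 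Hence the local condition at $\lambda$ becomes $\alpha\equiv\pm 1,\pm 7\pmod{\lambda^5}$.

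Applying this: for $1\leq i\leq f$ the prime $\pi_i$ already satisfies the congruence, so $\sqrt[5]{\pi_i}$ contributes directly; for $f+1\leq j\leq g$ the residue class of $\pi_j$ in $(\mathcal{O}_{k_0}/\lambda^5)^*/\langle\pm 1,\pm 7\rangle$ lies on the one-dimensional $\mathbb{F}_5$-line generated by the class of $\pi_{f+1}$, so there is a unique $h_j\in\{1,\ldots,4\}$ with $\pi_{f+1}\pi_j^{h_j}\equiv\pm 1,\pm 7\pmod{\lambda^5}$, giving assertion $(i)$. Adjoining the resulting fifth roots produces an unramified abelian extension of $k$ of degree $5^{d-3+q^*}$, matching the size of $C_{k,5}^{(\sigma)}$ from $(2)$; by maximality it must coincide with $(k/k_0)^*$, giving $(ii)$. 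For $(iii)$, when $n\equiv\pm 1\pm 7\pmod{\lambda^5}$ one of the $\sqrt[5]{\pi_i}$ becomes expressible via the others modulo $k^{*5}k_0^*$, and a value $q^*\neq 1$ means that more or fewer units $\zeta_5^i(\zeta_5+1)^j$ are norms, forcing further identifications; in every case one deletes exactly the right number of fifth roots from the list in $(ii)$ to preserve the degree $5^{d-3+q^*}$.
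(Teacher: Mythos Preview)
The paper does not supply its own proof of this proposition: it is introduced as a summary of results from reference~[\ref{Mani}] and is followed only by the sentence ``For more details see~[\ref{Mani}].'' So there is nothing to compare your argument against on the paper's side beyond that citation.

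Your outline is the standard route and is, in substance, what the cited reference carries out. Part~(1) via $\mathcal{A}^{1+\sigma+\cdots+\sigma^4}=N_{k/k_0}\mathcal{A}$ together with $h_{k_0}=1$ is exactly right, and parts~(2)--(3) are the direct specialization of Chevalley's ambiguous class number formula, using that $E_{k_0}/E_{k_0}^5$ is two-dimensional with representatives $\zeta_5$ and $\zeta_5+1$. For part~(4) your Kummer-theoretic description of $(k/k_0)^*$ and the identification of the local condition at $\lambda$ with the congruence $\alpha\equiv\pm1,\pm7\pmod{\lambda^5}$ is again the standard mechanism.

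One point in your sketch deserves tightening. In (4)(i) you assert that the classes of the $\pi_j$ $(f+1\le j\le g)$ in $(\mathcal{O}_{k_0}/\lambda^5)^*$ modulo fifth powers all lie on a single $\mathbb{F}_5$-line. The full quotient $(\mathcal{O}_{k_0}/\lambda^5)^*/((\mathcal{O}_{k_0}/\lambda^5)^*)^5$ is four-dimensional, not one-dimensional, so this is not automatic. What makes it work is the normalization imposed just before the proposition, namely $\pi_i\equiv a\pmod{5\mathcal{O}_{k_0}}$ with $a\in\{1,2,3,4\}$: this forces each $\pi_i$ to lie in the subgroup of $(\mathcal{O}_{k_0}/\lambda^5)^*$ consisting of elements congruent to a rational integer modulo $\lambda^4$, and \emph{that} subgroup modulo fifth powers is indeed cyclic of order~$5$ (it is essentially $(\mathbb{Z}/25)^*/((\mathbb{Z}/25)^*)^5$, whose fifth powers are precisely $\{\pm1,\pm7\}$). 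Once you invoke this normalization explicitly, the existence and uniqueness of the exponent $h_i$ follow, and the degree count in~(ii)--(iii) goes through as you indicate.
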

For more details see \ref{Mani}.\\
If $C_{k,5}$ is of type $(5,5)$ and rank $C_{k,5}^{(\sigma)}\,=\,1$, according to \ref{FOU1} we have three possible forms of the radicand $n$, and the genus field $(k/k_0)^*$ is given explicitly as follows
\begin{lemma}\label{lemma genus}
Let $p$ and $q$ prime numbers such that $p\,\equiv\,-1\,(\mathrm{mod}\,5)$ and $q\,\equiv\,\pm2\,(\mathrm{mod}\,5)$. Let $\pi_1$ and $\pi_2$ primes of $k_0$ such that $p\,=\,\pi_1\pi_2$. If $C_{k,5}$ is of type $(5,5)$ and rank $C_{k,5}^{(\sigma)}\,=\,1$, then we have:
\item[$(1)$] $n\,=\,5^ep\,\not\equiv\,\pm1\pm7\,(\mathrm{mod}\,25)$, where $e\in \{1,2,3,4\}$ and $p\,\not\equiv\,-1\,(\mathrm{mod}\,25)$. Then $(k/k_0)^*\,=\,k(\sqrt[5]{\lambda^\alpha\pi_1^{\alpha_1}\pi_2^{\alpha_2}})$, with $ \alpha,\alpha_1,\alpha_2 \in \{1,2,3,4\}$.
\item[$(2)$] $n\,=\,p^eq\,\equiv\,\pm1\pm7\,(\mathrm{mod}\,25)$, where $e\in \{1,2,3,4\}$, $p\,\not\equiv\,-1\,(\mathrm{mod}\,25)$ and $q\,\not\equiv\,\pm7\,(\mathrm{mod}\,25)$. Then $(k/k_0)^*\,=\,k(\sqrt[5]{q\pi_i^{\alpha_i}})$, with $i = 1,2$ and $\alpha \in \{1,2,3,4\}$.
\item[$(3)$] $n\,=\,p^e\,\equiv\,\pm1\pm7\,(\mathrm{mod}\,25)$, where $e\in \{1,2,3,4\}$, $p\,\equiv\,-1\,(\mathrm{mod}\,25)$. Then $(k/k_0)^*\,=\,k(\sqrt[5]{\pi_1^{\alpha_1}\pi_2^{\alpha_2}})$, with $\alpha_1,\alpha_2 \in \{1,2,3,4\}$.
\end{lemma}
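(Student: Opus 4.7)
The plan is to apply Proposition \ref{prop genre}, and in particular part (4), to each of the three forms of $n$ separately, after reducing the prime decomposition of $n$ in $k_0$ to the canonical form required there. Recall the following splitting behaviour in $k_0\,=\,\mathbb{Q}(\zeta_5)$: $5\mathcal{O}_{k_0}\,=\,\lambda^4\mathcal{O}_{k_0}$ with $\lambda\,=\,1-\zeta_5$; a rational prime $p\,\equiv\,-1\,(\mathrm{mod}\,5)$ splits as $p\mathcal{O}_{k_0}\,=\,\pi_1\pi_2$ with $\pi_1,\pi_2$ of residue degree two; a prime $q\,\equiv\,\pm 2\,(\mathrm{mod}\,5)$ remains inert. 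In cases $(1)$, $(2)$, $(3)$ of the lemma, the number $d$ of primes of $k_0$ ramified in $k/k_0$ is $3$, $3$, $2$ respectively, so the hypothesis $\operatorname{rank}\,C_{k,5}^{(\sigma)}\,=\,d-3+q^*\,=\,1$ from Proposition~\ref{prop genre}(2) forces $q^*\,=\,1,\,1,\,2$ respectively. In particular the genus field has degree $5$ over $k$ and must be generated by a single fifth root.

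Next, in each case I would sort the prime factors of $n$ into the two families of part $(4)$: the ``good'' primes satisfying $\pi\,\equiv\,\pm 1,\pm 7\,(\mathrm{mod}\,\lambda^5)$, and the ``bad'' ones. In case $(1)$, the assumption $n\,\not\equiv\,\pm 1\pm 7\,(\mathrm{mod}\,25)$ together with $p\,\not\equiv\,-1\,(\mathrm{mod}\,25)$ implies that $\pi_1,\pi_2$ are bad, so formula (ii) (together with the modification (iii) that handles the ramified prime $\lambda$) combines the three ramified factors $\lambda,\pi_1,\pi_2$ into a single radical $\lambda^\alpha\pi_1^{\alpha_1}\pi_2^{\alpha_2}$, the exponents coming from the $h_i$ of part (i). In case $(2)$, the conditions $n\,\equiv\,\pm 1\pm 7\,(\mathrm{mod}\,25)$, $p\,\not\equiv\,-1\,(\mathrm{mod}\,25)$ and $q\,\not\equiv\,\pm 7\,(\mathrm{mod}\,25)$ guarantee that a suitable combination $q\pi_i^{\alpha_i}$ lies in the good family, yielding the single generator $\sqrt[5]{q\pi_i^{\alpha_i}}$. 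In case $(3)$, the hypothesis $n\,=\,p^e\,\equiv\,\pm 1\pm 7\,(\mathrm{mod}\,25)$ with $p\,\equiv\,-1\,(\mathrm{mod}\,25)$ places the ramified primes $\pi_1,\pi_2$ in the appropriate family so that a single radical $\sqrt[5]{\pi_1^{\alpha_1}\pi_2^{\alpha_2}}$ generates the degree-$5$ genus field.

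Finally, one checks in each case that the constructed radical really defines a non-trivial unramified abelian extension of $k$ contained in $k_5^{(1)}$, hence equal to $(k/k_0)^*$; this is immediate once the degree of the constructed field is verified to be $5$, matching $5^{\operatorname{rank}\,C_{k,5}^{(\sigma)}}$. The main obstacle is the translation of the rational congruences on $n$, $p$, $q$ modulo $25$ into the Kummer congruences of the prime elements $\pi_i$ modulo $\lambda^5$; note that as ideals $25\mathcal{O}_{k_0}\,=\,\lambda^8\mathcal{O}_{k_0}\,\neq\,\lambda^5\mathcal{O}_{k_0}$, so the relation between the two congruence conditions requires a careful descent through the chain $\lambda^5 \mid \lambda^8$ and the correct identification of the exponents $h_i$ from part (i); this amounts to a case-by-case computation inside the residue ring $\mathcal{O}_{k_0}/\lambda^5\mathcal{O}_{k_0}$, which is where the distinct shapes $\lambda^\alpha\pi_1^{\alpha_1}\pi_2^{\alpha_2}$, $q\pi_i^{\alpha_i}$, and $\pi_1^{\alpha_1}\pi_2^{\alpha_2}$ actually emerge.
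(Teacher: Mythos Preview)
Your approach is essentially the same as the paper's: both derive the three explicit shapes of $(k/k_0)^*$ by feeding the prime factorisation of $n$ in $k_0$ into part~(4) of Proposition~\ref{prop genre} (the paper additionally cites [\ref{Mani}, Theorem~5.16] for the bookkeeping, and refers to [\ref{FOU1}] for the classification of the three radicands, which you take as given). Your write-up is in fact more detailed than the paper's two-line proof---your computation of $d$ and $q^*$ in each case, and your remark that the rational congruences modulo $25$ must be translated into congruences modulo $\lambda^5$ via $25\mathcal{O}_{k_0}=\lambda^8\mathcal{O}_{k_0}$, are exactly the substance that the paper outsources to the references.
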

\begin{proof}
For the three forms of $n$, we refer the reader to \ref{FOU1}. For the  relative genus field $(k/k_0)^*$ of each case, its follows from $(4)$ of proposition \ref{prop genre} and [\ref{Mani}, theorem 5.16].
\end{proof}

\subsection{intermediate extensions of
$k_5^{(1)}/k$}
Let $k\,=\,\mathbb{Q}(\sqrt[5]{n},\zeta_5)$ be the normal closure of the pure quintic field $\Gamma\,=\,\mathbb{Q}(\sqrt[5]{n})$, $C_{k,5}$ be the $5$-class group of $k$. When $C_{k,5}$ is of type $(5,5)$, it has $6$ subgroups of order $5$, denoted by $H_i$, $1\leq i \leq 6$. Let $K_i$ be the intermediate extension of
$k_5^{(1)}/k$, corresponding by class field theory to $H_i$. In \ref{FOU2} we proved that $C_{k,5}\,\cong\, C_{k,5}^+\times C_{k,5}^-$ such that $C_{k,5}^{+}\,=\,\{\mathcal{A}\in C_{k,5}\,|\,\mathcal{A}^{\tau^2} = \mathcal{A}\}$ and $C_{k,5}^{-}\,=\,\{\mathcal{X}\in C_{k,5}\,|\,\mathcal{X}^{\tau^2} = \mathcal{X}^{-1}\}$ with $Gal(k/\Gamma)\,=\,\langle\tau\rangle$. The following theorem allow us to determine the six intermediate extensions of $k_5^{(1)}/k$, using the action of $Gal(k/\mathbb{Q})$ on $C_{k,5}$.
\begin{theorem}\label{theo 6 ext}
\item[$(1)$] rank $C_{k,5}^{(\sigma)}\,\geq \,1$.
\item[$(2)$] If rank $C_{k,5}^{(\sigma)}\,= \,1$ then $C_{k,5}^{(\sigma)}\,=\,C_{k,5}^{+}\,=\,C_{k,5}^{1-\sigma}$ with $C_{k,5}^{1-\sigma}$ the principal genus.
\item[$(3)$] Let $\Gamma_5^{(1)}, (\Gamma')_5^{(1)}, (\Gamma^{''})_5^{(1)}, (\Gamma^{'''})_5^{(1)}$ and $(\Gamma^{''''})_5^{(1)}$ be respectivly the Hilbert $5$-class fields of $\Gamma^{'},\,\Gamma^{''},\,\Gamma^{'''}$ and $\Gamma^{''''}$. If rank $C_{k,5}^{(\sigma)}\,= \,1$, then $k\Gamma_5^{(1)}, k(\Gamma')_5^{(1)}, k(\Gamma^{''})_5^{(1)}, k(\Gamma^{'''})_5^{(1)}, k(\Gamma^{''''})_5^{(1)}$ and $(k/k_0)^*$ are the six intermediate extensions of $k_5^{(1)}/k$. Furthermore $\tau^2$ permutes the fields $k.\Gamma_5^{(1)}$, $k.(\Gamma_5^{'})^{(1)}$, $k.(\Gamma_5^{''})^{(1)}$, $k.(\Gamma_5^{'''})^{(1)}$ and $k.(\Gamma_5^{''''})^{(1)}$

\end{theorem}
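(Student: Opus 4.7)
The plan is to prove the three parts in order, viewing $C_{k,5}\cong(\mathbb{Z}/5)^2$ as a $2$-dimensional $\mathbb{F}_5$-vector space on which $\sigma$ and $\tau$ act.

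Part $(1)$ is immediate: the $5$-group $\langle\sigma\rangle$ acts on the non-trivial $5$-group $C_{k,5}$, so the class equation gives $|C_{k,5}^{(\sigma)}|\equiv|C_{k,5}|\equiv 0\pmod 5$, hence the rank is at least $1$.

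For part $(2)$, I first check from the explicit action on $\zeta_5$ and $\sqrt[5]{n}$ that $\tau\sigma\tau^{-1}=\sigma^2$, so $\tau^2\sigma\tau^{-2}=\sigma^{-1}$. The equality $C_{k,5}^{(\sigma)}=C_{k,5}^{1-\sigma}$ is then purely linear algebra: in characteristic $5$ the endomorphism $\sigma-1$ is nilpotent, and the rank hypothesis together with $\dim C_{k,5}=2$ forces $(\sigma-1)^2=0$, so $\operatorname{im}(\sigma-1)\subseteq\ker(\sigma-1)$ and both are $1$-dimensional. For the identification with $C_{k,5}^+$, I argue in two steps. First, since $h_{k_0}=1$, the ambiguous group $C_{k,5}^{(\sigma)}$ is generated by classes of primes of $k$ ramified over $k_0$; in each of the three radicand cases of Lemma~\ref{lemma genus} the underlying primes of $k_0$ are $\tau^2$-fixed ($\lambda$ manifestly, a rational inert $q$ trivially, and each $\pi_i$ in a split $p=\pi_1\pi_2$ with $p\equiv-1\pmod 5$ because the decomposition group has order $2$ and must equal $\langle\tau^2\rangle$); total ramification lifts this fixedness uniquely to primes of $k$, giving $C_{k,5}^{(\sigma)}\subseteq C_{k,5}^+$. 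Second, rank $C_{k,5}^+=2$ would mean $\tau^2$ acts trivially on $C_{k,5}$, forcing $\sigma=\sigma^{-1}$ and hence $\sigma=1$ on $C_{k,5}$ (since $\gcd(2,5)=1$), which contradicts the rank hypothesis; so rank $C_{k,5}^+=1$ and the inclusion becomes equality.

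For part $(3)$, class field theory identifies the intermediate extensions of $k_5^{(1)}/k$ with the subgroups of $C_{k,5}\cong\mathbb{F}_5^2$, yielding exactly $6$ degree-$5$ intermediate fields, so it suffices to exhibit $6$ distinct candidates. The genus field $(k/k_0)^*$ corresponds to $C_{k,5}^{1-\sigma}$, which has order $5$ by $(2)$, so $[(k/k_0)^*:k]=5$. For each conjugate $\Gamma^{(i)}$, previous results of \ref{FOU1} give that $5$ exactly divides $h_\Gamma$, hence $[\Gamma^{(i)}_5^{(1)}:\Gamma^{(i)}]=5$; coprimality $\gcd(4,5)=1$ makes $\Gamma^{(i)}_5^{(1)}$ and $k$ linearly disjoint over $\Gamma^{(i)}$, so $k\cdot\Gamma^{(i)}_5^{(1)}$ is an unramified abelian degree-$5$ extension of $k$. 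Distinctness among the five conjugate compositums is Remark~\ref{rem distin}; the genus field $(k/k_0)^*$ differs from every $k\cdot\Gamma^{(i)}_5^{(1)}$ because $(k/k_0)^*/k_0$ is abelian whereas $k\cdot\Gamma^{(i)}_5^{(1)}/k_0$ is not even Galois (a lift of $\sigma$ would send $\Gamma^{(i)}_5^{(1)}$ to a distinct $\Gamma^{(j)}_5^{(1)}$). The $\tau^2$-permutation follows from $\tau^2(\zeta_5^i\sqrt[5]{n})=\zeta_5^{-i}\sqrt[5]{n}$, which induces the involution $i\mapsto -i\pmod 5$ on the indices of the conjugates, fixing $\Gamma$ and swapping the other four in two pairs. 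The principal difficulty is the inclusion $C_{k,5}^{(\sigma)}\subseteq C_{k,5}^+$ in part $(2)$, which requires the case-by-case decomposition analysis from Lemma~\ref{lemma genus} together with the fact $h_{k_0}=1$.
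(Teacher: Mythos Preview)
Your arguments for part~(1) and for the equality $C_{k,5}^{(\sigma)}=C_{k,5}^{1-\sigma}$ in part~(2) are cleaner than the paper's: the class-equation argument and the nilpotency observation $(\sigma-1)^2=0$ on a two-dimensional $\mathbb{F}_5$-space replace the paper's explicit case analysis. Your bound $\operatorname{rank}C_{k,5}^+\leq 1$ via the commutation relation $\tau^2\sigma\tau^{-2}=\sigma^{-1}$ is also correct and not in the paper.

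There is, however, a genuine gap in your proof of $C_{k,5}^{(\sigma)}\subseteq C_{k,5}^+$. You assert that, because $h_{k_0}=1$, the ambiguous class group $C_{k,5}^{(\sigma)}$ is generated by classes of primes ramified in $k/k_0$. This is only true for the \emph{strongly} ambiguous classes $C_{k,s}^{(\sigma)}$, i.e.\ classes represented by $\sigma$-invariant ideals. An ambiguous class need not contain any ambiguous ideal; the quotient $C_{k,5}^{(\sigma)}/C_{k,s}^{(\sigma)}$ is controlled by units (it injects into $(E_{k_0}\cap N_{k/k_0}k^\times)/N_{k/k_0}E_k$) and can be non-trivial. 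Your ramified-prime analysis therefore only shows $C_{k,s}^{(\sigma)}\subseteq C_{k,5}^+$, and says nothing when $C_{k,s}^{(\sigma)}$ happens to be trivial. The paper closes exactly this gap by invoking a result of Gras~[\ref{Gras}] asserting that $C_{k,5}^{(\sigma)}/C_{k,s}^{(\sigma)}$ is generated (modulo $C_{k,s}^{(\sigma)}$) by a class lying in $(C_{k,5}^{(\sigma)})^+$, together with [\ref{FOU1}, Theorem~3.2] for the strongly ambiguous part. An alternative repair in your framework: the relation $\sigma\tau^2=\tau^2\sigma^{-1}$ already shows $C_{k,5}^{(\sigma)}$ is $\tau^2$-stable, hence equals $C_{k,5}^+$ or $C_{k,5}^-$; you would then still need a reason to exclude $C_{k,5}^-$, and that is precisely where one needs either a non-trivial strongly ambiguous class or the Gras-type input.

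A minor remark on part~(3): your description of the $\tau^2$-action (fixing $K_2=k\Gamma_5^{(1)}$ and swapping the other four in two transpositions, via $i\mapsto -i$ on the conjugate index) is the correct one; the paper's proof writes down what looks like a $5$-cycle, which cannot be induced by an involution, but the theorem only claims that $\tau^2$ permutes the five fields, which your argument establishes.
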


\begin{proof}
\item[$(1)$] Let $\mathcal{A}$ be a class that generate $C_{k,5}^+$. According to [\ref{FOU2}, lemma 5.2], $C_{k,5}^+\, \cong\, C_{\Gamma,5}$, then $\mathcal{A}$ can be identified with a class that generate  $C_{\Gamma,5}$.
\item[-] If $\mathcal{A} \in C_{k,5}^{(\sigma)}$ then $C_{k,5}^{+}\subset C_{k,5}^{(\sigma)}$ and rank $C_{k,5}^{(\sigma)} \geq 1$.
\item[-] If $\mathcal{A} \notin C_{k,5}^{(\sigma)}$ then $\mathcal{A}^{\sigma}\neq \mathcal{A}$. Furthermore $\mathcal{A}^\sigma \neq \mathcal{A}^2$, otherwise $\mathcal{A}^\sigma = \mathcal{A}^2 \Rightarrow \mathcal{A}^{\sigma^2} = (\mathcal{A}^\sigma)^2 = \mathcal{A}^4 \Rightarrow \mathcal{A}^{\sigma^3} = (\mathcal{A}^\sigma)^4 = \mathcal{A}^8=\mathcal{A}^3 \Rightarrow \mathcal{A}^{\sigma^4} = (\mathcal{A}^\sigma)^3 = \mathcal{A}^6 = \mathcal{A} \Rightarrow \mathcal{A}^{\sigma^5} = \mathcal{A}^\sigma = \mathcal{A}$, which is impossible since $\mathcal{A}^\sigma \neq \mathcal{A}$. By the same reasoning we show that $\mathcal{A}^\sigma \neq \mathcal{A}^3$, $\mathcal{A}^\sigma \neq \mathcal{A}^4$. Ainsi $\mathcal{A}$ et $\mathcal{A}^\sigma$ generate $C_{k,5}$. Since $\mathcal{A}^5 = 1$ and $\mathcal{A}^{1+\sigma+\sigma^2+\sigma^3+\sigma^4} = 1$ then $\mathcal{A}^{\sigma^4} = \mathcal{A}^{4+4\sigma+4\sigma^2+4\sigma^3}$, this equality allow us to prove that $\mathcal{A}^{\sigma^3+3\sigma^2+2\sigma^2-1}$ is ambiguous class. Thus  rank $C_{k,5}^{(\sigma)} \geq 1$
\item[$(2)$] According to \ref{FOU1} or [\ref{Limura}, Lemma 3.3], since rank $C_{k,5}^{(\sigma)}\,=\,1$ then the radicand $n$ is not divisible by any prime $p$ such that $p\,\equiv\,1\,(\mathrm{mod}\, 5)$. By [\ref{FOU1}, Theorem 3.2] all elements of the group of strong ambiguous ideal classes, denoted $C_{k,s}^{(\sigma)}$ are fixed by $\tau^2$ and according to \ref{Gras} $C_{k,5}^{(\sigma)}/C_{k,s}^{(\sigma)}$ is generated by a class modulo $C_{k,s}^{(\sigma)}$ of element choosen in $(C_{k,5}^{(\sigma)})^+$. Thus all elements of $C_{k,5}^{(\sigma)}$ are fixed by $\tau$ and $\tau^2$, then $C_{k,5}^{(\sigma)} \subset C_{k,5}^{+}$. Therefor $C_{k,5}^{(\sigma)}\,=\,C_{k,5}^{+}$ because they are groups of order $5$. Let $\mathcal{A} \in C_{k,5}^+$, since $(\mathcal{A}^{1-\sigma})^{\tau^2}\,=\,\mathcal{A}^{\tau^2-\tau^2\sigma}\,=\,\mathcal{A}^{\tau^2-\sigma\tau^2}\,=\,(\mathcal{A}^{\tau^2})^{1-\sigma}\,=\,\mathcal{A}^{1-\sigma}$ then $(C_{k,5}^+)^{1-\sigma}\,\subset\, C_{k,5}^+$. By the same reasoning for $\mathcal{X} \in C_{k,5}^-$ we can prove that $(C_{k,5}^-)^{1-\sigma}\,\subset\, C_{k,5}^+$ because $\mathcal{X}$ and $\mathcal{X}^{-1}$ are in $C_{k,5}^-$. Since $C_{k,5}\,=\,\langle\mathcal{A},\mathcal{X}\rangle$ we get that $C_{k,5}^{1-\sigma}\,\subset\, C_{k,5}^+$. Thus $C_{k,5}^{(\sigma)}\,=\,C_{k,5}^{+}$ because they are groups of order $5$.
\item[$(3)$] Its sufficient to see that the six extensions are quintic cyclic and unramified extensions of $k$, then they are all contained in $k_5^{(1)}$. Precisly, since $Gal(k/\Gamma)\,=\, \langle\tau\rangle$, and by class field theory, $k\Gamma_5^{(1)}$ correspond to the subgroup $C_{k,5}^-$. Its easy to see that $C_{k,5}^-\,=\,C_{k,5}^{1-\tau^2}$. The relative genus field $(k/k_0)^*$ correspond to the principal genus $C_{k,5}^{1-\sigma}$ and by the previous point $(k/k_0)^*$ correspond to $C_{k,5}^{(\sigma)}$. We denote by $K_1\,=\,(k/k_0)^*$,  $K_2\,=\,k\Gamma_5^{(1)}$, $K_3\,=\,k(\Gamma')_5^{(1)}$, $K_4\,=\,k(\Gamma'')_5^{(1)}$, $K_5\,=\,k(\Gamma''')_5^{(1)}$ and $K_6\,=\,k(\Gamma'''')_5^{(1)}$. Since $K_2^{\tau^2}\,=\,K_4$, $K_4^{\tau^2}\,=\,K_6$, $K_6^{\tau^2}\,=\,K_3$, $K_3^{\tau^2}\,=\,K_5$ and $K_5^{\tau^2}\,=\,K_2$, then $\tau^2$ permutes the fields $K_2, K_3, K_4, K_5$ and $K_6$. According to [\ref{FOU2}, proposition 5.1], Since $C_{k,5}$ is of type $(5,5)$, we have $5$ divides exactly $h_\Gamma$, and by remark \ref{rem distin}, the six fields  $K_1, K_2, K_3, K_4, K_5$ and $K_6$ are distincts
\end{proof}

\section{Capitulation problem}
Let $k\,=\,\mathbb{Q}(\sqrt[5]{n},\zeta_5)$ be the normal closure of the pure quintic field $\Gamma\,=\,\mathbb{Q}(\sqrt[5]{n})$, $C_{k,5}$ be the $5$-class group of $k$. $C_{k,5}$ is elementary abelian bicyclic of type $(5,5)$. We begin with illuminating the generators of $C_{k,5}$ whenever rank $C_{k,5}^{(\sigma)}\,=\,1$.

\subsection{Generators of $C_{k,5}$}
According to \ref{FOU2}, if $C_{k,5}$ is of type $(5,5)$ and rank $C_{k,5}^{(\sigma)}\,=\,1$, we have an explicit study of generators of $C_{k,5}$, by means of the relation $C_{k,5}\,\cong\, C_{k,5}^+\times C_{k,5}^-$ and the three forms of the radicand $n$ proved in \ref{FOU1}, and already stated in the introduction. we have the following:
\begin{theorem}
\label{theo gener}
Let $k\,=\,\mathbb{Q}(\sqrt[5]{n},\zeta_5)$, where $n$ is a positive integer $5^{th}$ power-free, be the normal closure of the pure quintic field $\Gamma\,=\,\mathbb{Q}(\sqrt[5]{n})$ and $Gal(k/\Gamma)\,=\,\langle\tau\rangle$. Let $p$,$q$ and $l$ primes such that, $p\,\equiv\,-1\, (\mathrm{mod}\, 5)$, $q\,\equiv\,\pm2\, (\mathrm{mod}\, 5)$ and $l\,\neq\,p$, $l\,\neq\,q$ . Assume that $C_{k,5}$ is of type $(5,5)$ and rank$(C_{k,5}^{(\sigma)})\,=\,1$, then we have:
\begin{itemize}
\item[(1)] If $n\,=\,5^ep\,\not\equiv\,\pm1\pm7\,(\mathrm{mod}\,25)$, with $e\in\{1,2,3,4\}$ and $p\,\not\equiv\,-1\, (\mathrm{mod}\, 25)$. The prime $p$ decompose in $k$ as $p\mathcal{O}_k\,=\,\mathcal{P}^5_1\mathcal{P}^5_2$, where $\mathcal{P}_1$ and $\mathcal{P}_2$ are prime ideals of $k$. Let $\mathcal{L}$ a prime ideal of $k$ above $l$. If $5$ and $l$ are not a quintic residue modulo $p$, then the $5$-class group $C_{k,5}$ is generated by classes $[\mathcal{P}_1]$ and $[\mathcal{L}]^{1-\tau^2}$ and we have:
\begin{center}
$C_{k,5}$ = $\langle[\mathcal{P}_1]\rangle\times\langle[\mathcal{L}]^{1-\tau^2}\rangle$ = $\langle[\mathcal{P}_1],[\mathcal{L}]^{1-\tau^2}\rangle$ 
\end{center}

\item[(2)] If $n\,=\,p^eq\,\equiv\,\pm1\pm7\,(\mathrm{mod}\,25)$, with $e\in\{1,2,3,4\}$ and $p\,\not\equiv\,-1\, (\mathrm{mod}\, 25)$, $q\,\not\equiv\,\pm7\, (\mathrm{mod}\, 25)$. The prime $p$ decompose in $k$ as $p\mathcal{O}_k\,=\,\mathcal{P}^5_1\mathcal{P}^5_2$, where $\mathcal{P}_1$ and $\mathcal{P}_2$ are prime ideals of $k$. Let $\mathcal{L}$ a prime ideal of $k$ above $l$. If $q$ and $l$ are not a quintic residue modulo $p$, then the $5$-class group $C_{k,5}$ is generated by classes $[\mathcal{P}_1]$ and $[\mathcal{L}]^{1-\tau^2}$ and we have:
\begin{center}
$C_{k,5}$ = $\langle[\mathcal{P}_1]\rangle\times\langle[\mathcal{L}]^{1-\tau^2}\rangle$ = $\langle[\mathcal{P}_1],[\mathcal{L}]^{1-\tau^2}\rangle$ 
\end{center}

\item[(3)] If $n\,=\,p^e\,\equiv\,\pm1\pm7\,(\mathrm{mod}\,25)$, with $e\in\{1,2,3,4\}$ and $p\,\equiv\,-1\, (\mathrm{mod}\, 25)$. $5$ decompose in $k$ as $5\mathcal{O}_k\,=\,\mathcal{B}^4_1\mathcal{B}^4_2\mathcal{B}^4_3\mathcal{B}^4_4\mathcal{B}^4_5$, where $\mathcal{B}_i$  are prime ideals of $k$. If $5$ is not a quintic residue modulo $p$, then the $5$-class group $C_{k,5}$ is generated by classes $[\mathcal{B}_i]$ and $[\mathcal{B}_j]$, $i\,\neq\,j\in\{1,2,3,4,5\}$ and we have:
\begin{center}
$C_{k,5}$ = $\langle[\mathcal{B}_i]\rangle\times\langle[\mathcal{B}_j]\rangle$ = $\langle[\mathcal{B}_i],[\mathcal{B}_j]\rangle$ 
\end{center}
\end{itemize}
\end{theorem}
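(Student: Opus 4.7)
The plan is to combine the decomposition $C_{k,5}\cong C_{k,5}^{+}\times C_{k,5}^{-}$ from \ref{FOU2} with the identification $C_{k,5}^{+}=C_{k,5}^{(\sigma)}$ established in Theorem \ref{theo 6 ext}(2). Since $C_{k,5}$ is of type $(5,5)$, both factors are cyclic of order $5$, so it suffices to exhibit a non--trivial class in each of $C_{k,5}^{+}$ and $C_{k,5}^{-}$ and then verify that the two classes lie in complementary $\tau^{2}$-eigenspaces, which immediately forces $C_{k,5}$ to be their internal direct product.

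First, I would analyze the decomposition of the ramified primes of $k/k_0$. In cases $(1)$ and $(2)$, the hypothesis $p\equiv-1\pmod 5$ makes $p$ split in $k_0$ as $p=\pi_1\pi_2$, and because $p\mid n$ each $\pi_i$ is totally ramified in the Kummer extension $k/k_0$, giving $p\mathcal{O}_k=\mathcal{P}_1^{5}\mathcal{P}_2^{5}$. In case $(3)$, the same argument provides $\mathcal{P}_1,\mathcal{P}_2$, but now the congruence $n\equiv\pm 1\pm 7\pmod{25}$ together with Lemma \ref{lemma genus} forces $\lambda$ to be unramified in $k/k_0$, and a ramification/degree count together with the fact that $\lambda$ is the unique prime of $k_0$ above $5$ yields the splitting $5\mathcal{O}_k=\mathcal{B}_1^{4}\mathcal{B}_2^{4}\mathcal{B}_3^{4}\mathcal{B}_4^{4}\mathcal{B}_5^{4}$ of the statement.

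Second, I would build the generator of the ambiguous part $C_{k,5}^{(\sigma)}=C_{k,5}^{+}$. In cases $(1)$ and $(2)$ the class $[\mathcal{P}_1]$ is fixed by $\sigma$ (since $\mathcal{P}_1$ is the unique prime of $k$ above the ramified prime $\pi_1$) and by $\tau^{2}$, so $[\mathcal{P}_1]\in C_{k,5}^{+}$. To show $[\mathcal{P}_1]$ is non-trivial I would assume $\mathcal{P}_1=(\alpha)$, apply $N_{k/k_0}$ and compare ideals in $k_0$; this would force $5$ (case $(1)$) or $q$ (case $(2)$) to be a fifth power residue modulo $\pi_1$, hence modulo $p$, contradicting the hypothesis. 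Case $(3)$ is handled analogously using $[\mathcal{B}_i]$ together with the fact that $5$ is not a quintic residue modulo $p$, invoking the isomorphism $C_{k,5}^{+}\cong C_{\Gamma,5}$ from \ref{FOU2} to identify the image in the pure quintic field $\Gamma$.

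Third, I would produce the generator of $C_{k,5}^{-}$. Since $\tau^{4}=\mathrm{id}$ on $C_{k,5}$, the relation $(1-\tau^{2})(1+\tau^{2})=1-\tau^{4}$ acts trivially, so every class of the form $[\mathcal{L}]^{1-\tau^{2}}$ lies in $\ker(1+\tau^{2})=C_{k,5}^{-}$. In cases $(1)$ and $(2)$, taking $\mathcal{L}$ a prime of $k$ above $l$, I would show $[\mathcal{L}]^{1-\tau^{2}}\neq 1$: otherwise $[\mathcal{L}]\in C_{k,5}^{+}\cong C_{\Gamma,5}$, and descending to $\Gamma$ would imply that $l$ is a quintic residue modulo $p$, contrary to assumption. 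In case $(3)$, the independent generator is a second prime $[\mathcal{B}_j]$, $j\neq i$, where the norm relation $\prod_{s=0}^{4}\sigma^{s}(\mathcal{B}_1)=\lambda\mathcal{O}_k$ together with the already-established order of $[\mathcal{B}_i]$ forces the two classes to be independent modulo the principal subgroup.

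The main obstacle will be the non-triviality verifications: it is precisely at this point that the hypotheses ``$5$ (resp.\ $q$, resp.\ $l$) is not a quintic residue modulo $p$'' are used, and translating them into statements of non-principality of the chosen ideals requires a careful application of Hasse's norm theorem or explicit quintic reciprocity. Once non-triviality is settled, the direct product decomposition $C_{k,5}=\langle[\mathcal{P}_1]\rangle\times\langle[\mathcal{L}]^{1-\tau^{2}}\rangle$ (and its analogue in case $(3)$) is immediate because the two generators lie in distinct eigenspaces of the involution $\tau^{2}$ acting on the $(5,5)$-group.
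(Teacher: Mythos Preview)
The paper does not actually prove this theorem in the text; immediately before the statement it writes ``According to \ref{FOU2} \ldots\ we have an explicit study of generators of $C_{k,5}$'' and then states the result without proof, deferring entirely to the companion preprint \ref{FOU2}. Your proposal is therefore not a comparison target but an attempted reconstruction, and as such it is broadly on the right track for cases~$(1)$ and~$(2)$: the eigenspace decomposition $C_{k,5}=C_{k,5}^{+}\times C_{k,5}^{-}$, the identification $C_{k,5}^{+}=C_{k,5}^{(\sigma)}$ from Theorem~\ref{theo 6 ext}(2), and the non-triviality arguments via quintic residue symbols are indeed the ingredients used in \ref{FOU2}. One small logical slip: you assert that $[\mathcal{P}_1]$ is fixed by $\tau^{2}$ and conclude $[\mathcal{P}_1]\in C_{k,5}^{+}$, but $\tau^{2}$ is complex conjugation on $k_0$ and swaps the ideals $\mathcal{P}_1,\mathcal{P}_2$; the correct order of implication is that $\mathcal{P}_1$ is $\sigma$-invariant, hence $[\mathcal{P}_1]\in C_{k,5}^{(\sigma)}=C_{k,5}^{+}$, and \emph{therefore} $[\mathcal{P}_1]^{\tau^{2}}=[\mathcal{P}_1]$ as a class.

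There is, however, a genuine gap in your treatment of case~$(3)$. The primes $\mathcal{B}_1,\ldots,\mathcal{B}_5$ above $\lambda$ are permuted cyclically by $\sigma$ (since $\lambda$ splits completely in $k/k_0$), so none of the ideals $\mathcal{B}_i$ is $\sigma$-invariant and the classes $[\mathcal{B}_i]$ do \emph{not} lie in $C_{k,5}^{(\sigma)}=C_{k,5}^{+}$ a priori. Your suggestion to ``invoke the isomorphism $C_{k,5}^{+}\cong C_{\Gamma,5}$'' to place $[\mathcal{B}_i]$ in the plus-part does not work for this reason, and the norm relation $\prod_{s}\sigma^{s}(\mathcal{B}_1)=\lambda\mathcal{O}_k$ you quote only says $\prod_i[\mathcal{B}_i]=1$, which is far from giving independence of two of them. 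What is actually needed is an argument that some $[\mathcal{B}_i]$ has non-trivial projections onto \emph{both} eigenspaces (equivalently, that $[\mathcal{B}_i]\notin C_{k,5}^{+}$ and $[\mathcal{B}_i]\notin C_{k,5}^{-}$), after which the $\sigma$-orbit relation forces $[\mathcal{B}_i]$ and $[\mathcal{B}_j]=[\mathcal{B}_i]^{\sigma^{j-i}}$ to be independent. This step is where the hypothesis ``$5$ is not a quintic residue modulo $p$'' enters, and it requires a separate computation (carried out in \ref{FOU2}) that your sketch does not supply.
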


\subsection{Study of capitulation}
Let $k\,=\,\mathbb{Q}(\sqrt[5]{n},\zeta_5)$ be the normal closure of the pure quintic field $\Gamma\,=\,\mathbb{Q}(\sqrt[5]{n})$, $C_{k,5}$ be the $5$-class group of $k$. When $C_{k,5}$ is of type $(5,5)$, it has $6$ subgroups of order $5$, denoted by $H_i$, $1\leq i \leq 6$. Let $K_i$ be the intermediate extension of $k_5^{(1)}/k$, corresponding by class field theory to $H_i$. As each $K_i$ is cyclic of order $5$ over $k$, there is at least one subgroup of order $5$ of $C_{k,5}$, i.e.,
at least one $H_l$ for some $l \in \{1,2,3,4,5,6\}$, which capitulates in $K_i$ (by Hilbert's theorem $94$).
\begin{definition}
Let $\mathcal{S}_j$ be a generator of $H_j$ $(1\leq j \leq 6)$ corresponding to $K_j$. For $1\leq j \leq 6$, let $i_j \in  \{0,1,2,3,4,5,6\}$. We say that the capitulation is of type $(i_1,i_2,i_3,i_4,i_5,i_6)$ to mean the following:
\item[$(1)$] when $i_j \in \{1,2,3,4,5,6\}$, then only the class $\mathcal{S}_{i_j}$ and its powers capitulate in $K_j$;
\item[$(2)$] when $i_j\,=\,0$, then all the $5$-classes capitulate in $K_j$.
\end{definition}
By theorem \ref{theo 6 ext},we have that $C_{k,5}^{(\sigma)} \,=\,C_{k,5}^{+}$. Let $C_{k,5}^+\,=\,\langle\mathcal{A}\rangle$ and $C_{k,5}^-\,=\,\langle\mathcal{X}\rangle$. According to [\ref{FOU2}, Lemma 5.2], $C_{k,5}^+\,\cong\,C_{\Gamma,5}$, we can consider $C_{\Gamma,5}$ as subgroup of $C_{k,5}$, and we may choose the class $\mathcal{A}$ as $\mathcal{A}\,=\,[j_{k/\Gamma}(\mathcal{B})]$, when $\mathcal{B}$ is ideal of $\Gamma$ such that its class generates $C_{\Gamma,5}$.\\
We order the six intermediat fields $K_i$ of $k_5^{(1)}/k\, (1\leq i \leq 6)$ as follows
\begin{itemize}
\item[-] $K_1\,=\,(k/k_0)^*$ correspond to the subgroup  $H_1\,=\,C_{k,5}^{(\sigma)}\,=\,C_{k,5}^+\,=\,\langle\mathcal{A}\rangle$.
\item[-] $K_2\,=\,k\Gamma_5^{(1)}$ correspond to the subgroup  $H_2\,=\,C_{k,5}^{1-\tau^2}\,=\,C_{k,5}^-\,=\,\langle\mathcal{X}\rangle$.
\item[-] $K_3\,=\,k(\Gamma^{'})_5^{(1)}$, $K_4\,=\,k(\Gamma^{''})_5^{(1)}$, $K_5\,=\,k(\Gamma^{'''})_5^{(1)}$ and $K_6\,=\,k(\Gamma^{''''})_5^{(1)}$.
\end{itemize}
Our principal result on capitulation can be stated as follows:
\begin{theorem}\label{theo capit} Using the same notation as above.
\item[$(1)$] The class $\mathcal{A}$ capitulates in the six quintic extensions $K_i,\,i = 1,2,3,4,5,6.$
\item[$(2)$] The same number of classes capitulate in $K_2, K_3, K_4, K_5$ and $K_6$. More precisely, the possible capitulation types are $(0,0,0,0,0,0)$, $(1,0,0,0,0,0)$, $(0,1,1,1,1,1)$ or $(1,1,1,1,1,1)$.
\end{theorem}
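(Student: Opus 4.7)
The plan is to establish part (1) by producing an explicit principal representative of $\mathcal{A}$ inside each of the six intermediate fields, and then to deduce part (2) by combining Hilbert's Theorem~94 with the $\tau^2$-symmetry furnished by Theorem~\ref{theo 6 ext}(3).

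For part~(1), first fix an ideal $\mathcal{B}$ of $\Gamma$ whose class generates $C_{\Gamma,5}$, and use the isomorphism $C_{k,5}^{+}\cong C_{\Gamma,5}$ of [\ref{FOU2}, Lemma~5.2] to write $\mathcal{A}=[j_{k/\Gamma}(\mathcal{B})]$. The Principal Ideal Theorem applied to $\Gamma$ makes $\mathcal{B}\mathcal{O}_{\Gamma_5^{(1)}}$ principal, and extending scalars to $K_2=k\Gamma_5^{(1)}$ gives capitulation of $\mathcal{A}$ in $K_2$. For each of $K_3,K_4,K_5,K_6$ the same argument works with $\Gamma$ replaced by a conjugate $\Gamma^{(i)}$: the automorphism $\sigma^i\in\operatorname{Gal}(k/k_0)$ restricts to an isomorphism $\Gamma\to\Gamma^{(i)}$, and since $\mathcal{A}\in C_{k,5}^{(\sigma)}$ (Theorem~\ref{theo 6 ext}(2)) forces $\mathcal{A}^{\sigma^i}=\mathcal{A}$, the ideal $\sigma^i(\mathcal{B})$ of $\Gamma^{(i)}$ satisfies $\mathcal{A}=[j_{k/\Gamma^{(i)}}(\sigma^i(\mathcal{B}))]$, so it principalizes in $(\Gamma^{(i)})_5^{(1)}$ and hence in $k(\Gamma^{(i)})_5^{(1)}$. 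For $K_1=(k/k_0)^*$ I would invoke the Tannaka--Terada theorem [\ref{Teradda}] recalled in the introduction: in the cyclic extension $k/k_0$ the subgroup of $\operatorname{Gal}(k/k_0)$-invariant classes capitulates in the relative genus field, and $C_{k,5}^{(\sigma)}=\langle\mathcal{A}\rangle$ by Theorem~\ref{theo 6 ext}(2).

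For part~(2), set $\kappa_j=\ker\bigl(j_{K_j/k}\colon C_{k,5}\to C_{K_j,5}\bigr)$. Hilbert's Theorem~94 applied to the cyclic unramified extension $K_j/k$ of degree $5$ gives $|\kappa_j|\geq 5$, and since $|C_{k,5}|=25$ we conclude $|\kappa_j|\in\{5,25\}$. Combined with $\mathcal{A}\in\kappa_j$ from part~(1), this forces $\kappa_j\in\{\langle\mathcal{A}\rangle,C_{k,5}\}$, so the $j$th entry of the capitulation type is either $1$ or $0$. To identify $\kappa_2,\ldots,\kappa_6$ I use the $\tau^2$-action on $\{K_2,\ldots,K_6\}$ from Theorem~\ref{theo 6 ext}(3): lifting $\tau^2$ to an automorphism $\widetilde{\tau^2}\in\operatorname{Gal}(k_5^{(1)}/\mathbb{Q})$ that sends $K_j$ to $K_{j'}$ yields $\kappa_{j'}=(\kappa_j)^{\tau^2}$, and because $\mathcal{A}\in C_{k,5}^{+}$ is fixed by $\tau^2$ the subgroup $\langle\mathcal{A}\rangle$ is $\tau^2$-stable. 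Consequently all five kernels $\kappa_2,\ldots,\kappa_6$ are equal, and the four combinations $(\kappa_1,\kappa_2)\in\{\langle\mathcal{A}\rangle,C_{k,5}\}^2$ yield precisely the capitulation types $(1,1,1,1,1,1)$, $(1,0,0,0,0,0)$, $(0,1,1,1,1,1)$ and $(0,0,0,0,0,0)$.

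The step I expect to require most care is the $\tau^2$-equivariance $\kappa_{j'}=(\kappa_j)^{\tau^2}$: one has to check that some lift $\widetilde{\tau^2}\in\operatorname{Gal}(k_5^{(1)}/\mathbb{Q})$ really permutes the $K_j$ as in Theorem~\ref{theo 6 ext}(3) and that conjugation by it carries capitulation kernels to capitulation kernels. Once this naturality is in hand, the remainder of the argument reduces to Hilbert~94, the Tannaka--Terada theorem, and the bookkeeping above.
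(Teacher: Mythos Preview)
Your proposal is correct and follows essentially the same route as the paper: capitulation of $\mathcal{A}$ in $K_2$ via the Principal Ideal Theorem for $\Gamma$, transport to the other $K_j$ by a Galois symmetry, Tannaka--Terada for $K_1$, and then the $\tau^2$-permutation of $K_2,\ldots,K_6$ combined with $\mathcal{A}\in\kappa_j$ to pin down the four possible types. The only cosmetic difference is that for part~(1) you move from $K_2$ to $K_3,\ldots,K_6$ using the $\sigma$-invariance of $\mathcal{A}$ (i.e., $\mathcal{A}\in C_{k,5}^{(\sigma)}$), whereas the paper uses the $\tau^2$-invariance (i.e., $\mathcal{A}\in C_{k,5}^{+}$); since Theorem~\ref{theo 6 ext}(2) identifies these two subgroups, the arguments are interchangeable.
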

\begin{proof}
\item[$(1)$] Let $C_{k,5}^{(\sigma)}\,=\,C_{k,5}^+\,=\,\langle\mathcal{A}\rangle$, where $\mathcal{A}\,=\,[j_{k/\Gamma}(\mathcal{B})]$ such that $\mathcal{B}$ is ideal of $\Gamma$. Since $\Gamma_5^{(1)}$ is Hilbert $5$-class field of $\Gamma$, then $\mathcal{B}$ becames pricipal in $\Gamma_5^{(1)}$. Thus when $\mathcal{B}$ considred as ideal of  $k\Gamma_5^{(1)}$, $\mathcal{B}$ becames principal in  $k\Gamma_5^{(1)}$. Then we get that $\mathcal{A}$ capitulates in  $k\Gamma_5^{(1)}\,=\,K_2$. As $K_4\,=\,K_2^{\tau^2}$, $K_6\,=\,K_4^{\tau^2}$, $K_3\,=\,K_6^{\tau^2}$, $K_5\,=\,K_3^{\tau^2}$ and $\mathcal{A}^{\tau^2}\,=\,\mathcal{A}$, we deduce that $\mathcal{A}$ capitulates in $K_2, K_3, K_4, K_5$ and $K_6$. To prove that $\mathcal{A}$ capitulates in $K_1$, we use Tanaka-Therada's theorem. Since $K_1\,=\,(k/k_0)^*$ is the genus field of $k/k_0$, and $\mathcal{A}$ is ambiguous class, then $\mathcal{A}$ capitulates in $K_1$.

\item[$(2)$] We have $\tau^2$ permute the fields $K_6,K_5, K_4, K_3, $ and $K_2$, and $C_{k,5}^{\tau^2}\,=\,C_{k,5}$. Since each element of $C_{k,5}^{(\sigma)}\,=\,\langle\mathcal{A}\rangle$ capitulates in $K_6, K_5, K_4, K_3, $ and $K_2$, then we have two possibilities: Only $\mathcal{A}$ and its powers capitulate in the extensions $K_6, K_5, K_4, K_3, $ and $K_2$, or all the $5$-classes capitulate in $K_6, K_5, K_4, K_3, $ and $K_2$. This give us as type of capitulation, $(i_1,0,0,0,0,0)$ or $(i_1,1,1,1,1,1)$ with $i_1 \in \{0,1,2,3,4,5,6\}$. Since $\mathcal{A}$ capitulates in $K_1$, then $i_1\,\neq\,0$, so $i_1$ is necessary $1$. Thus we prove that the types of capitulation possibles are $(0,0,0,0,0,0)$, $(0,1,1,1,1,1)$, $(1,0,0,0,0,0)$ or $(1,1,1,1,1,1)$.
\end{proof}


\end{document}